\def\id{{\rm{id}}}
\def\alg#1{\mathbb{#1}}
\def\family#1{\mathcal{#1}}
\def\cat#1{\mathbf{#1}}
\def\ASF{\cat{AccSetFun}}
\def\st{\,;\,}
\def\to{\rightarrow}
\def\modA{/\!\!\approx_{\alg{A}}}
\def\modB{/\!\!\approx_{\alg{B}}}
\newtheorem{theorem}{Theorem}[section]
\newtheorem{definition}[theorem]{Definition}
\newtheorem{lemma}[theorem]{Lemma}
\numberwithin{equation}{section}
\begin{document}
\title[Accessible set functors are universal]{Accessible set functors are universal}
\author{Libor Barto}
\address{Charles University\\Faculty of Mathematics and Physics\\Department of Algebra\\Sokolovsk\'a 83\\186 75 Praha 8\\Czech Republic}
\email{libor.barto@gmail.com}
\thanks{
Libor Barto has received funding from the Czech Science Foundation (grant No 201/06/0664) and 
from the European Research Council
(ERC) under the European Unions Horizon 2020 research and
innovation programme (grant agreement No 771005).
}
\date{March 28, 2019}
\subjclass{Primary 18B15, 18A22, 18A25; Secondary 08B05}
\keywords{set functor, universal category, full embedding}
\dedicatory{Dedicated to the memory of V\v era Trnkov\'a.}

\begin{abstract}
It is shown that every concretizable category can be fully embedded into the category of accessible set functors and natural transformations. 
\end{abstract}

\maketitle

\section{Introduction}

This paper contributes to two areas in which the work of V\v era Trnkov\'a plays a major role: universal categories and set functors. 

We start by providing some background to these areas and then we state the main result of this paper. The discussion is rather informal and some fine technical points are neglected. 
The essential concepts for this paper are formally introduced in Section~\ref{sec:prelim}.

\subsection{Universality}

One approach to measuring the complexity of a category is to characterize its full subcategories.

There is usually a natural limitation on the possible full subcategories. The following two situations are especially common: if the category in question is \emph{concretizable} (i.e., admits a faithful functor to the category of sets), then every full subcategory is concretizable as well; if the category is \emph{algebraic} (i.e., is isomorphic to a full subcategory of a category of universal algebras), then so are all of its full subcategories.

It turned out that such natural limitations are the only ones for many categories of interest~\cite{PT,T5}. An algebraic category is
often \emph{alg--universal}, i.e., each category of universal algebras is isomorphic to its full subcategory. Examples include
the category of graphs%
\footnote{If morphisms are not specified, then the most natural choice is meant. Here the morphisms are the edge--preserving mappings.}
and the category of algebras with two unary operations~\cite{HP66}.
Concretizable categories are sometimes even \emph{universal}, i.e., each concretizable category is among the isomorphic copies of their full subcategories. This is the case, e.g., for the category of hypergraphs~(a combination of results of Hedrl\'in and Ku\v cera, see \cite{PT}),
the category of topological semigroups~\cite{T6}, or the category of topological spaces with open continuous
maps as morphisms~\cite{PT}.

Although it is not immediately seen from the definition, alg--universal categories are indeed comprehensive. 
For instance, they contain an isomorphic copy of every \emph{small} category, i.e., a category whose object class is a set (see \cite{PT}). In particular, every partially ordered set can be represented as a set of objects with the relation ``there exists a morphism'' and, also, every group can be represented as the automorphism group of some object. 
The latter property of a category, \emph{group--universality}, predates the concept of alg--universality and even the notion of a category. It is a remarkable achievement of the theory that the group--universality is now usually best shown by proving the far stronger alg--universality. In fact, the development leading to this paper, discussed in Subsection~\ref{subsec:res}, is an instance of this phenomenon.

Universal categories may seem way more comprehensive than alg--universal ones. For instance, they can represent in the above sense every partially ordered class. However, a combination of results of Ku\v cera, Pultr, and Hedrl\'in proves (see, again, \cite{PT}) that the statement ``every alg--universal category is universal'' is equivalent to the set--theoretical assumption ``the class of all measurable cardinals is a set''. It follows that, in some models of set theory, the alg--universality and universality coincide. Still, an absolute proof of universality indicates, philosophically speaking, that the category is more comprehensive than any algebraic category.

\subsection{Set functors}

\emph{Set functors} are simply functors from the category $\cat{Set}$ of all sets and mappings to itself. The natural choice of morphisms between them is natural transformations. 
Besides being basic examples of functors (which perhaps motivated early results from the 1970s, e.g., \cite{T1,T2,T3,K1}), set functors are also among the central concepts in both Universal Coalgebra and Universal Algebra. Their appearance in the former is direct and explicit as type functors for coalgebras that model various state base systems such as streams, automata, or Kripke structures~\cite{Rut00}. The significance in the latter is a bit less direct and not so widely acknowledged in the Universal Algebra community.

One of the main focuses of Universal Algebra is the study of systems of universally quantified equations over some signature (or classes of all models of such systems --- varieties). 
The special attention to these objects has paid large dividends in theoretical computer science, namely in the computational complexity of constraint satisfaction problems (CSPs, for short)%
\footnote{A recent survey is~\cite{BKW17}. More recently, a solution to the main open problem of the area was announced independently by Bulatov~\cite{Bul17} and Zhuk~\cite{Zhu17}. Both proofs heavily use Universal Algebra.},
where to each CSP one assigns a system of equations which captures the computational complexity of that CSP, and morphisms between these systems 
then correspond to certain natural polynomial--time reductions. 
It has turned out~\cite{BOP18} that, in this context, it is enough to  
consider particularly simple equations, those that contain exactly one function symbol on both sides, i.e., equations of the form $t(\mbox{some variables}) = s(\mbox{some variables})$. The importance of such equations, called \emph{height one equations} in~\cite{BOP18} or \emph{flat equations} in~\cite{AGT10}, is further witnessed by a very recent development in promise CSPs~\cite{BKO}.

Returning back to set functors --- their category is equivalent to the category of systems of height one equations!%
\footnote{
This claim is rather sloppy; for instance, the collection of all set functors is not a class, so set functors do not form a legitimate category.} 
We will now sketch one direction of this equivalence and refer the reader to a nice presentation in~\cite{AGT10} for details.
Given a set functor $F$ we regard members of $FX$ as function symbols of arity $|X|$ and, for each mapping $f: X \to Y$ and $t \in FX$, we add one equation to the system in a natural way, e.g., if 
$X=\{x_1, x_2, x_3\}$,  
$Y = \{y_1,y_2\}$,  
$f(x_1)=f(x_2)=y_2$, $f(x_3)=y_1$,  
$t \in FX$, and 
$s = Ff(t)$, 
then we add the equation $t(y_2,y_2,y_1) = s(y_1,y_2)$.  
Natural transformations of set functors then correspond to mappings preserving height one equations.

Note that the obtained system of equations forms a proper class (except for uninteresting functors with $FX = \emptyset$ for all $X \neq \emptyset$) and involves infinitary function symbols, which is rather non-standard in Universal Algebra. 
However, if we restrict to \emph{accessible} set functors (to be defined in Section~\ref{sec:prelim}), then it is enough to introduce function symbols for elements of $FX$ for a sufficiently large $X$. Restricting further to \emph{finitary} functors allows us to introduce function symbols only for elements of $F(\{x_1, \dots, x_n\})$, $n \in \mathbb N$, and we are back in a standard Universal Algebraic realm.

\subsection{Universal set functors} \label{subsec:res}
The original motivation for studying the universality of set functors was another open problem formulated in~\cite{BT1}: Is the category of varieties and interpretations~\cite{GT84} alg--universal? This category can be described, in the language of previous paragraphs, as the category of sets of arbitrary finitary equations, not necessarily height one. From this perspective, starting with height one equations suggests itself as a reasonable step. The first paper in this direction~\cite{BZ1} has shown, using a rather involved construction, that the category of set functors is group--universal. Then, a significantly simpler construction was carried out in~\cite{Ba2} to prove that the category of finitary set functors is alg--universal. Since this category (as well as the category of $\kappa$--accessible functors for any fixed $\kappa$) is also algebraic, the alg--universality of finitary (or $\kappa$--accessible for $\kappa \geq 7$) set functors cannot be further strengthened. Finally, the motivating problem was solved as well: the category of varieties is alg--universal~\cite{Bar07}.

This development has left the following natural question open: Is the upper bound on the arity of function symbols essential? In other words, is the category of set functors more comprehensive than the category of finitary  ones (equivalently, $\kappa$-accessible for a fixed $\kappa \geq 7$)?
We answer this question in affirmative by showing, in Section~\ref{sec:univ}, that the category of accessible set functors is universal. 
In Section~\ref{sec:concrete} we verify that this result cannot be further strengthened by proving 
that the category of accessible set functors is concretizable. Altogether, we get the following theorem.

\begin{theorem} \label{thm:main}
A category $\cat{Z}$ is isomorphic to a full subcategory of the category of accessible set functors if and only if $\cat{Z}$ is concretizable.
\end{theorem}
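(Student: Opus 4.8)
The theorem is an equivalence whose two implications have rather different characters. The forward one is the easy direction: a full subcategory of a concretizable category is concretizable, so it suffices to show that the category $\cat{AccSetFun}$ of accessible set functors is itself concretizable. The relevant fact is that a $\kappa$-accessible set functor $F$ is (naturally isomorphic to) the left Kan extension of its restriction to the full subcategory $\cat{Set}_{<\kappa}$ of sets of cardinality $<\kappa$, so that natural transformations out of $F$ correspond to natural transformations out of that restriction; in particular, a natural transformation $F\to G$ is determined by its components at the ordinals $\gamma<\kappa$. Hence, writing $\kappa_F$ for the least regular cardinal at which $F$ is accessible, the data $\coprod_{\gamma<\kappa_F}F\gamma$, together with $F\emptyset$ and a tag recording $\kappa_F$, already separates parallel natural transformations out of $F$. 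Turning this assignment into an honest faithful functor $\cat{AccSetFun}\to\cat{Set}$ is routine but mildly technical, the one subtlety being that $\kappa_F$ varies with $F$: one has to prescribe how the recorded data of $G$ is pulled back along a natural transformation $F\to G$ when $\kappa_G<\kappa_F$, which can be arranged, for instance, by using minimal supports to push elements of $G(\kappa_F)$ canonically down into $\coprod_{\gamma<\kappa_G}G\gamma$.

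For the reverse --- and substantial --- implication, the goal is to prove that $\cat{AccSetFun}$ is universal, i.e., that every concretizable category admits a full embedding into it. Since a composite of full embeddings is again a full embedding, and since certain concrete categories are already known to be universal --- for instance the category of hypergraphs (a combination of results of Hedrl\'in and Ku\v cera, see \cite{PT}) --- it is enough to exhibit a full embedding of one such universal category $\cat{U}$ into $\cat{AccSetFun}$. The plan is to attach to each object $Z$ of $\cat{U}$, with underlying set $A_Z$ and a family of distinguished subsets of $A_Z$, say, an accessible set functor $F_Z$ assembled functorially from power functors $\cat{Set}(B,-)$ --- one copy of $\cat{Set}(e,-)$ for each hyperedge $e$ and one copy of the identity functor for each element of $A_Z$ --- glued along the incidences of the structure and decorated with auxiliary rigid gadgets, elaborating the constructions of \cite{BZ1,Ba2}. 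A structure-preserving map $Z\to Z'$ then induces a natural transformation $F_Z\to F_{Z'}$; the assignment $Z\mapsto F_Z$ is visibly functorial; and faithfulness is immediate once the gadgets record the underlying map.

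The heart of the proof is \emph{fullness}: every natural transformation $F_Z\to F_{Z'}$ has to be shown to arise from a $\cat{U}$-morphism $Z\to Z'$. By (co-)Yoneda one computes $\mathrm{Nat}(F_Z,F_{Z'})$ as a limit, over the pieces $B$ of $Z$, of sets of the form $F_{Z'}(B)$, the gluing maps of $Z$ imposing compatibility conditions, and one then has to verify that the rigid gadgets make the pieces unmixable and their comparison maps unbendable, so that a compatible family is forced to restrict to a single structure-preserving map $A_Z\to A_{Z'}$ --- that is, to an element of $\cat{U}(Z,Z')$. This rigidity analysis is the step I expect to be the main obstacle, and it is exactly here that accessibility strictly beyond the finitary realm is needed: with arities of bounded size one obtains only alg--universality (as in \cite{Ba2}), while full universality forces the gadgets to employ arbitrarily large arities, so that the functors $F_Z$ can distinguish the large-cardinal phenomena (ultrafilters on the arity sets, and the like) that separate universality from alg--universality. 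Combining such a full embedding with the concretizability from the first paragraph yields the theorem.
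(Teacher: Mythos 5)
Your proposal leaves the substantial implication unproved. The whole content of the ``only if is sharp'' direction is that $\ASF$ is universal, and in your sketch the decisive step --- fullness of the embedding, i.e.\ that every natural transformation $F_Z\to F_{Z'}$ comes from a morphism $Z\to Z'$ --- is explicitly deferred as ``the main obstacle'' of a rigidity analysis you do not carry out; a plan that stops there has not proved the theorem. There is also a variance problem in the plan itself: hom-functors $\cat{Set}(B,-)$ are contravariant in $B$, so a hypergraph morphism $Z\to Z'$ does not ``visibly'' induce a natural transformation $F_Z\to F_{Z'}$ in the covariant direction you assert. The paper deals with this honestly: it first fully embeds the universal category $\cat{T}$ of spaces-without-axioms into itself onto a subcategory $\cat{K}$ whose open families are closed under complement and contain no $\emptyset$, and whose morphisms have images of size greater than $2$ (rigidification by a rigid six-vertex graph), and then builds a \emph{contravariant} full embedding $\Phi:\cat{K}\to\ASF$ sending $(A,\family{R})$ to the quotient of the hom-functor $\cat{Set}(A,-)$ that identifies, for each $R\in\family{R}$, the two complementary two-valued maps determined by $R$; fullness is then a short Yoneda-type argument (a transformation $\mu$ is determined by $\mu_B$ applied to the class of $\id_B$, and continuity of the induced $f$ is tested on maps into a two-element set), and universality follows because the opposite of a universal category is universal. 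Your intuition that unboundedly large arities are essential is consistent with this construction (the arity is $|A|$ and ranges over all cardinals), but the appeal to ``ultrafilters/large-cardinal phenomena'' plays no role in any actual argument.

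The converse direction is also not as ``routine'' as you claim. Since the accessibility rank $\kappa_F$ varies over a proper class of objects with no uniform bound, a natural transformation $\mu:F\to G$ with $\kappa_G<\kappa_F$ induces no canonical map from $\coprod_{\gamma<\kappa_F}F\gamma$ to $\coprod_{\gamma<\kappa_G}G\gamma$; your proposed fix via ``minimal supports'' presupposes that accessible set functors have least supports (this needs preservation of wide intersections, which you neither assume nor prove) and, worse, that the push-down is compatible with composition --- exactly the point where such naive constructions of a faithful functor tend to fail, and for which you give no argument. The paper avoids constructing a faithful functor altogether: it verifies Isbell's condition and invokes the Isbell--Freyd theorem, fixing for each pair $F,G$ an infinite $X$ with both functors $|X|$-accessible and showing that a pair $(\mu:H\to F,\nu:H\to G)$ is determined, up to the Isbell equivalence, by the set of pairs $(\mu_X(x),\nu_X(x))$ together with the analogous set at $\emptyset$ (the empty-set case is a genuine separate case, handled separately in the paper). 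So as it stands neither implication is established: you must either carry out the fullness/rigidity analysis and repair the variance in the universality direction, and either make your recorded-data assignment genuinely functorial or replace it by an Isbell-type argument in the concretizability direction.
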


\section{Preliminaries} \label{sec:prelim}

\subsection{Notation}
  The set $\{1,2, \dots, n\}$ is denoted $[n]$. We use the notation $A \sqcup B$ ($f \sqcup g$) for a disjoint union of sets (mappings).
	
  Let $f: X \to Y$ be a mapping. The image of $R \subseteq X$ under $f$ is denoted by $f[R]$ and
  the preimage of $S \subseteq Y$ under $f$ is denoted by $f^{-1}[S]$. 
	Mappings are composed from right to left, that is, $f \circ g(x) = fg(x) = f(g(x))$. 
	The identity mapping $X \to X$ is denoted $\id_X$.

\subsection{Categorical concepts}
Let $\cat{M}$ and $\cat{N}$ be categories.
A functor $\Phi: \cat{M} \to \cat{N}$
is \emph{faithful} (\emph{full}, respectively) if, for any two $\cat{M}$--objects $A$ and $B$, it maps
the set of all $\cat{M}$--morphisms from $A$ to $B$ injectively (surjectively, respectively) into the set of all $\cat{N}$--morphisms from $\Phi A$ to $\Phi B$. A \emph{full embedding} is a full and faithful functor which is, moreover, injective on objects. 

A category $\cat{M}$ is a full subcategory of $\cat{N}$ if $\cat{M}$ is obtained from $\cat{N}$ by taking some of the objects and all the morphisms between them. Note that if $\Phi: \cat{M} \to \cat{N}$ is a full embedding, then the image of $\Phi$ is a full subcategory of $\cat{N}$ which is isomorphic to $\cat{M}$. 

The category of all sets and mappings is denoted $\cat{Set}$. A category $\cat{M}$ is \emph{concretizable} if there exists a faithful functor $\cat{M} \to \cat{Set}$. A category $\cat{M}$ is \emph{universal} if every concretizable category has a full embedding into $\cat{M}$. 

A \emph{set functor} is a functor $\cat{Set} \to \cat{Set}$. For set functors $F$ and $G$, and a set $X$, the $X$-th component of a natural transformation $\mu: F \to G$ is denoted $\mu_X: F X \to G X$.

\subsection{Accessible set functors}

A set functor $F$ is called $\kappa$-accessible if every element of $FY$ can be accessed from an element of $FX$ with $|X| < \kappa$ in the following sense. 

\begin{definition}
Let $F: \cat{Set} \to \cat{Set}$ be a set functor and $\kappa$ a cardinal.
$F$ is \emph{$\kappa$-accessible} if 
\[
FY = \bigcup \{Ff[FX] \st |X| < \kappa, f: X \to Y\}
\]
for every set $X$.

A set functor $F$ is \emph{accessible} if it is $\kappa$-accessible for some $\kappa$, and $F$ is \emph{finitary} if it is $\aleph_0$-accessible.
\end{definition}

This definition agrees with the general notion of an accessible functor, see~\cite{AP01} for several equivalent characterizations. 

Strictly formally, accessible set functors do not form a legitimate category since each functor is a proper class. However, every accessible functor has a set presentation which describes the functor up to natural isomorphism. One option is to use an equational presentation as described in~\cite{AGT10}. Alternatively, it can be observed that a $\kappa$-accessible functor (as well as a natural transformation between such functors) is determined by its restriction to the set of cardinals smaller than $\kappa$. 
In this sense, it is legitimate to talk about the category of accessible functors and we denote this category by $\ASF$.

\subsection{Topological spaces without axioms}
If some known ``testing'' universal category $\cat{M}$ can be fully embedded into $\cat{N}$, then $\cat{N}$ is clearly universal as well.
A particularly useful testing category  proved to be ''topological spaces without axioms''. Its objects are pairs consisting of a set and a family of its ``open'' subsets, where, in contrast to topological spaces, we do not impose any restriction on the family of open sets. Morphisms, the ``continuous'' maps, are defined just like in the category of topological spaces.

\begin{definition}
Objects of the category $\cat{T}$ are pairs $(A, \family{R})$, where $A$ is a set and
$\family{R}$ is a family of subsets of $A$.
A morphism $f: (A, \family{R}) \to (B, \family{S})$ is a mapping $f: A \to B$ such that $f^{-1}[S] \in \family{R}$
for all $S \in \family{S}$.
\end{definition}

The category $\cat{T}$ is universal (Hedrl\'in, Ku\v cera, see \cite{PT}).
However, we will construct a \emph{contravariant functor} from $\cat{T}$ 
to $\ASF$, that is, a functor from the opposite category $\cat{T}^{op}$.  
Fortunately, it is not hard to see that the opposite category to a universal category is universal~(see Section 4.6 of~\cite{T5}).

\begin{theorem} \label{thm:topouniv}
The category $\cat{T}$, as well as the opposite category $\cat{T}^{op}$, is universal.
\end{theorem}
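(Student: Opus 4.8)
The plan is to assemble both assertions from results already in the literature plus one routine categorical observation, rather than to build anything from scratch. The universality of $\cat{T}$ is the theorem of Hedrl\'in and Ku\v cera recorded in~\cite{PT}, and I would take it as given: starting from a concretizable category $\cat{Z}$ with a faithful functor $U\colon\cat{Z}\to\cat{Set}$, one equips each underlying set $UZ$ with a suitably rigid family of subsets so that the continuous maps between the resulting objects of $\cat{T}$ are exactly the images under $U$ of the $\cat{Z}$-morphisms. This rigidification argument is the one substantial ingredient, and I do not intend to reprove it.

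It remains to deduce the universality of $\cat{T}^{op}$, and for this I would prove the general fact that $\cat{M}^{op}$ is universal whenever $\cat{M}$ is. The first step is to observe that concretizability is a self-dual property: if $U\colon\cat{Z}\to\cat{Set}$ is faithful, then so is $U^{op}\colon\cat{Z}^{op}\to\cat{Set}^{op}$, and post-composing with the contravariant power-set functor $P\colon\cat{Set}^{op}\to\cat{Set}$, given on morphisms by $g\mapsto g^{-1}[-]$, yields a faithful functor $\cat{Z}^{op}\to\cat{Set}$; faithfulness of $P$ is immediate, since whenever $g(x)\neq h(x)$ the element $x$ lies in exactly one of $g^{-1}[\{g(x)\}]$ and $h^{-1}[\{g(x)\}]$. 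The second step applies universality of $\cat{M}$ to the concretizable category $\cat{Z}^{op}$, producing a full embedding $\Phi\colon\cat{Z}^{op}\to\cat{M}$. The third step passes to opposites: $\Phi^{op}\colon\cat{Z}\to\cat{M}^{op}$ is again full, faithful, and injective on objects, because opposition leaves object classes untouched and merely relabels hom-sets. Hence every concretizable $\cat{Z}$ fully embeds into $\cat{M}^{op}$, so $\cat{M}^{op}$ is universal; taking $\cat{M}=\cat{T}$ completes the argument.

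I do not expect a genuine obstacle here: once the Hedrl\'in--Ku\v cera theorem is quoted, the rest is a short diagram chase whose only slightly delicate point is checking that $\cat{Z}^{op}$ is concretizable in the intended sense of admitting a faithful functor to $\cat{Set}$ (and not merely to $\cat{Set}^{op}$), which is exactly what the faithfulness of the power-set functor supplies.
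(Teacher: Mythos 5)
Your proposal matches the paper's treatment: the paper likewise takes the Hedrl\'in--Ku\v cera theorem for the universality of $\cat{T}$ as given (citing~\cite{PT}) and invokes the fact that the opposite of a universal category is universal (citing Section 4.6 of~\cite{T5}), which is exactly the duality step you carry out. Your explicit argument for that step --- concretizability of $\cat{Z}^{op}$ via the faithful contravariant power-set functor, followed by dualizing the full embedding $\cat{Z}^{op}\to\cat{T}$ --- is the standard proof and is correct.
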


\section{Set functors are universal} \label{sec:univ}

In this section we show that the category of accessible functors is universal by constructing
a contravariant functor from $\cat{T}$ to $\ASF$. 
Let us first informally sketch the construction. 

An object $(A, \family{R})$ will be sent to a set functor described by a set of height one equations in a single operation symbol $t$ of arity $|A|$. There will be one equation for each $R$:
$$
t(\underbrace{x, x, \dots, x}_{R}, \underbrace{y, y, \dots, y}_{A \setminus R}) = t(\underbrace{y,y, \dots, y}_R, \underbrace{x, x, \dots, x}_{A \setminus R})\enspace.
$$
This object map naturally extends to a contravariant functor and it is not hard to observe that  $f: (A, \family{R}) \to (B, \family{S})$ is ``continuous'' if and only if the image of $f$ preserves height one equations. 

The only substantial catch in this construction is that the above equation entails the equation where the roles of $R$ and $A \setminus R$ are swapped. For this reason we first construct an auxiliary embedding which will resolve this issue.

\subsection{Auxiliary full embedding}

Our aim now is to construct a full embedding $\Psi: \cat{T} \to \cat{T}$ whose image, denoted $\cat{K}$, has the following properties.
\begin{description}
\item[(K1)] For every $\cat{K}$--object $(A,\family{R})$ and $R \in \family{R}$, we have 
      $A \setminus R \in \family{R}$ and $R \neq \emptyset$.
\item[(K2)] If $f: (A, \family{R}) \to (B, \family{S})$ is a $\cat{K}$-morphism,
      then $|f[A]| > 2$.
\end{description}
  
The construction requires a graph (symmetric, loopless, without multiply edges)	on a sufficiently large even number of vertices with no nonidentical automorphisms. 
Let us fix one such a graph with vertex set  
   $V = \{v_1, v_2, \ldots, v_6\}$ and edge set $\family{G}$, where $\family{G}$ is formally handled as a family of two-element subsets of $V$. The functor $\Psi$ is defined as follows.  
  
  \begin{align*}
  \Psi(A,\family{R}) &= (\overline{A}, \overline{\family{R}})  \\
  {\overline{A}} &= A \sqcup V \\ 
  \overline{\family{R}} &= 
    \{ \{v_i\} \st i \in [6] \} \cup \{\overline{A} \setminus \{v_i\} \st i \in [6]\} \cup \\
 &  \quad
    \{ G \st G \in \family{G}\} \cup \{ \overline{A} \setminus G \st G \in \family{G}\} \cup \\
 &  \quad 
    \{ \{v_1, v_2, v_3\} \cup R \st R \in \family{R}\} \} \cup \{\{v_4, v_5, v_6\} \cup (A \setminus R) \st R \in \family{R}\} \\
  \Psi f  &= f \sqcup \id_V
  \end{align*}
 
  We first check that $\Psi f$ is always continuous. 
	
  \begin{lemma}
	If $f: (A,\family{R}) \to (B, \family{S})$ is a $\cat{T}$--morphism, then 
	so is $\Psi f: (\overline{A},\overline{\family{R}}) \to (\overline{B},\overline{\family{S}})$
  \end{lemma}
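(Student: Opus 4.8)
The plan is to verify the defining condition of a $\cat{T}$-morphism directly: for every $T \in \overline{\family{S}}$ we must show $(\Psi f)^{-1}[T] \in \overline{\family{R}}$. The structural observation that drives everything is that $\Psi f = f \sqcup \id_V$ acts on $\overline{B} = B \sqcup V$ by $f$ on the $B$-part and by the identity on the $V$-part, so for any $T \subseteq \overline{B}$ we get the clean decomposition $(\Psi f)^{-1}[T] = f^{-1}[T \cap B] \sqcup (T \cap V)$. In particular the $V$-part of $T$ is transported unchanged, and the $B$-part is pulled back by $f$. This reduces the lemma to a finite case analysis over the six families making up $\overline{\family{S}}$.

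Running through those cases: if $T = \{v_i\}$ or $T = G$ with $G \in \family{G}$, then $T \subseteq V$, so the preimage equals $T$ itself, which already belongs to $\overline{\family{R}}$; if $T = \overline{B} \setminus \{v_i\}$ or $T = \overline{B} \setminus G$, then $T \cap B = B$ and $f^{-1}[B] = A$, so the preimage is $\overline{A} \setminus \{v_i\}$, resp.\ $\overline{A} \setminus G$, again in $\overline{\family{R}}$. The only cases that invoke the hypothesis on $f$ are $T = \{v_1,v_2,v_3\} \cup S$ and $T = \{v_4,v_5,v_6\} \cup (B \setminus S)$ with $S \in \family{S}$. In the first, the preimage is $\{v_1,v_2,v_3\} \cup f^{-1}[S]$, and continuity of $f$ gives $R := f^{-1}[S] \in \family{R}$, matching the fifth family of $\overline{\family{R}}$. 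In the second, the preimage is $\{v_4,v_5,v_6\} \cup f^{-1}[B \setminus S]$, and since preimage commutes with complements, $f^{-1}[B \setminus S] = A \setminus f^{-1}[S] = A \setminus R$ with $R \in \family{R}$, matching the sixth family of $\overline{\family{R}}$.

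There is essentially no obstacle here; the statement is pure verification, and the construction of $\overline{\family{R}}$ and $\overline{\family{S}}$ has been set up precisely so that each case closes immediately. The one point worth a moment's care is to notice that every set appearing in $\overline{\family{R}}$ (and $\overline{\family{S}}$) has a rigidly prescribed intersection with $V$ — a single vertex, its complement in $V$, an edge, a non-edge, $\{v_1,v_2,v_3\}$, or $\{v_4,v_5,v_6\}$ — so that, because $\Psi f$ fixes $V$ pointwise, the $V$-component never interferes with the $A$-component; once this is observed the six cases are routine.
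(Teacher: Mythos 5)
Your proof is correct and follows essentially the same route as the paper: a direct case analysis over the six families in $\overline{\family{S}}$, using that $\Psi f$ acts as $f$ on the $B$-part and as the identity on $V$ (the paper treats three cases and declares the complementary ones analogous, which you spell out explicitly).
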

	
  \begin{proof}
  Pick $Q \in \overline{\family{S}}$. To prove that $(\Psi f)^{-1}[Q] \in \overline{\family{R}}$ we analyze the six cases in the definition of $\overline{\family{S}}$. 
  \begin{itemize}
    \item If $Q = \{v_i\}$, $i \in [6]$, then $(\Psi f)^{-1}[\{v_i\}] = (f \sqcup \id_V)^{-1}[\{v_i\}] = \{v_i\} \in \overline{\family{R}}$.
    \item If $Q \in \family{G}$, then $(\Psi f)^{-1}[Q] = Q \in \overline{\family{R}}$.
    \item If $Q = \{v_1, v_2, v_3\} \cup S$ with $S \in \family{S}$, then $(\Psi f)^{-1}[Q] = 
      \{v_1, v_2, v_3\} \cup f^{-1}[S] \in \overline{\family{R}}$ since $f$ is a $\cat{T}$--morphism.
  \end{itemize}
  The remaining three cases are complementary and completely analogous. 
  \end{proof}
  
	Clearly, $\cat{K}$ satisfies (K1) and $\Psi$ is a functor (preserves composition and identities) which is faithful and injective on objects.
	It remains to prove that it is full (then (K2) follows as well).
	
	Let
  $(A,\family{R})$ and $(B, \family{S})$ be $\cat{T}$--objects and
  $g: (\overline{A},\overline{\family{R}}) \to (\overline{B},\overline{\family{S}})$ be
  a $\cat{T}$--morphism. We need to show that $g = \Psi f$ for a $\cat{T}$--morphism 
	$f: (A,\family{R}) \to (B, \family{S})$
  
  \begin{lemma} \label{lem:one}
  The function $g$ restricted to $V$ is a bijection $V \to V$ and $g[A] \subseteq B$. 
  \end{lemma}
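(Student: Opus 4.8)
The plan is to exploit the rigidity of the fixed graph $(V,\family{G})$ on six vertices together with the structure of the ``decorations'' we added to $\overline{\family{R}}$ and $\overline{\family{S}}$. First I would show that $g$ maps $V$ into $V$. For each $i\in[6]$ the set $\overline{B}\setminus\{v_i\}$ lies in $\overline{\family{S}}$, so $g^{-1}[\overline{B}\setminus\{v_i\}]\in\overline{\family{R}}$; hence $g^{-1}[\{v_i\}]$ is the complement of a member of $\overline{\family{R}}$. Scanning the six families that make up $\overline{\family{R}}$, the only members whose complement is a singleton are the sets $\overline{A}\setminus\{v_j\}$, so $g^{-1}[\{v_i\}]$ is either empty or a singleton $\{v_j\}$ with $j\in[6]$. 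Since the sets $\{v_i\}$, $i\in[6]$, partition $g[\overline{A}]\cap V$ into fibres and their union over $i$ pulls back to cover all of $V$ (because $g(v)\in V$ would have to hold once we know no element of $A$ maps into $V$), a counting argument on the six fibres forces $g$ to restrict to a bijection $V\to V$ and to send $A$ into $B$.

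To make that counting work I first need that no element of $A$ is sent into $V$ and no element of $V$ is sent into $B$; equivalently $g[V]\subseteq V$ and $g[A]\subseteq B$. For $g[V]\subseteq V$: if some $v\in V$ had $g(v)\in B$, consider the member $\{v_1,v_2,v_3\}\cup S$ of $\overline{\family{S}}$ for a suitable $S$ (including $S=\emptyset$ if $\emptyset\in\family{S}$, or else argue with $\overline{B}\setminus(\{v_4,v_5,v_6\}\cup(B\setminus S))$); its preimage must be one of the listed sets, and each listed set meets $V$ in a set of size $1,3$, or $|V|-1=5$, or in a $\family{G}$-edge or its complement — never in a way consistent with an arbitrary $v$ being separated from the other five by the image structure. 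The clean route is: the six singletons $\{v_i\}\subseteq\overline{B}$ and their complements are in $\overline{\family{S}}$, so $g$ pulled back along them partitions $\overline{A}$; but the only elements of $\overline{A}$ that can lie in a singleton fibre $g^{-1}[\{v_i\}]$, or be separated this finely, are elements of $V$, since every member of $\overline{\family{R}}$ either contains all of $\{v_1,v_2,v_3\}$, or all of $\{v_4,v_5,v_6\}$, or is controlled by $\family{G}$ on $V$ and trivial on $A$. Running through these cases shows $g^{-1}[V]=V$.

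I expect the main obstacle to be bookkeeping rather than a conceptual difficulty: one must check, case by case over the six constituent families of $\overline{\family{R}}$, that preimages of the ``$V$-only'' sets in $\overline{\family{S}}$ can only be ``$V$-only'' sets in $\overline{\family{R}}$, and in particular that the members $\{v_1,v_2,v_3\}\cup R$ and $\{v_4,v_5,v_6\}\cup(A\setminus R)$ never masquerade as pullbacks of singletons, complements of singletons, edges, or edge-complements. Once $g[V]\subseteq V$, $g[A]\subseteq B$, and the partition-counting are in place, bijectivity of $g\!\restriction_V$ is immediate from the pigeonhole principle on six fibres covering six points, completing the proof.
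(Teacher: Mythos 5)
There is a genuine gap, and it sits right at the start. From $\overline{B}\setminus\{v_i\}\in\overline{\family{S}}$ you conclude that $g^{-1}[\{v_i\}]$ is the complement of a member of $\overline{\family{R}}$, and then that it must be empty or a singleton $\{v_j\}$. This is a non sequitur: the family $\overline{\family{R}}$ is closed under complementation by construction, so ``complement of a member'' carries exactly the same information as ``member'' (which you already get directly from $\{v_i\}\in\overline{\family{S}}$), and a member of $\overline{\family{R}}$ need not be a singleton --- a priori the fibre $g^{-1}[\{v_i\}]$ could just as well be of the form $\{v_1,v_2,v_3\}\cup R$ or $\overline{A}\setminus\{v_j\}$. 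Your subsequent counting argument then explicitly assumes $g[V]\subseteq V$ and $g[A]\subseteq B$, i.e.\ most of what the lemma asserts, and the second paragraph that is supposed to supply these facts is circular where it is concrete (``the only elements that can lie in a singleton fibre'' presupposes the fibres are singletons) and otherwise a hand-wave (``running through these cases shows $g^{-1}[V]=V$''), with no identified property of $\overline{\family{R}}$ that actually forces the conclusion.

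The missing observation, which makes the proof short, is simply that \emph{every} member of $\overline{\family{R}}$ has nonempty intersection with $V$. The six fibres $g^{-1}[\{v_1\}],\dots,g^{-1}[\{v_6\}]$ are pairwise disjoint members of $\overline{\family{R}}$, so each of the six points of $V$ lies in (exactly) one of them; equivalently, each $v_i$ has a $g$-preimage in $V$, so $g$ maps the finite set $V$ onto $V$ and hence bijectively. Knowing this, each fibre $g^{-1}[\{v_i\}]$ meets $V$ in exactly one element, and the only members of $\overline{\family{R}}$ with a one-element intersection with $V$ are the singletons $\{v_j\}$; hence $g^{-1}[V]\subseteq V$, which is precisely $g[A]\subseteq B$. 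Your proposal circles around these two structural facts about $\overline{\family{R}}$ (nonempty trace on $V$; singletons are the only members with one-element trace) but never isolates them, and without them the chain of deductions does not close.
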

	
  \begin{proof}
  Let $i \in [6]$ be arbitrary.
  Since $\{v_i\} \in \overline{\family{S}}$, we must have
  $g^{-1}[\{v_i\}] \in \overline{\family{R}}$. By definition of $\overline{\family{R}}$, all the members of $\overline{\family{R}}$ have a nonempty intersection with $V$, therefore
  $g^{-1}[\{v_i\}] \cap V$ is nonempty. We conclude that each $v_i$ has a $g$--preimage in $V$; in other words, $g$ maps $V$ onto $V$.
	Since $V$ is finite, the first claim follows. 
	
	Now we know that, for each $i \in [6]$, the set $g^{-1}[\{v_i\}] \in \overline{\family{R}}$ has a one-element intersection with $V$. The
	only such elements of $\overline{\family{R}}$ are of the form $\{v_j\}$, therefore $g^{-1}[V] \subseteq V$ which is equivalent to the second claim.
	  \end{proof}

  By Lemma~\ref{lem:one}, we can write $g$ as a disjoint union 
  $g = f \sqcup h$, where $f: A \to B$, and $h: V \to V$ is a bijection.
	The following lemma finishes the proof that $\Psi f = g$.
  
  \begin{lemma} \label{lem:three}
  We have $h = \id_V$ and $f : (A, \family{R}) \to (B, \family{S})$ is a $\cat{T}$--morphism.
  \end{lemma}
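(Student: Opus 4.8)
The plan is to exploit the rigidity of the fixed graph $\family{G}$ on $V$ to pin down $h$, and then use the edges of $\overline{\family{S}}$ coming from $\family{S}$ together with those coming from $\family{R}$ to force $f$ to be continuous. First I would show $h=\id_V$. Since $h\colon V\to V$ is already known to be a bijection (Lemma~\ref{lem:one}), it suffices to show $h$ is an automorphism of the graph $(V,\family{G})$; rigidity then gives $h=\id_V$. To see that $h$ preserves $\family{G}$: for each $G\in\family{G}$ we have $G\in\overline{\family{S}}$, so $g^{-1}[G]\in\overline{\family{R}}$. Now $g^{-1}[G]=f^{-1}[G]\sqcup h^{-1}[G]$, but $G\subseteq V$ and $g[A]\subseteq B$, so $f^{-1}[G]=\emptyset$ and $g^{-1}[G]=h^{-1}[G]\subseteq V$. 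The only members of $\overline{\family{R}}$ contained in $V$ of size $2$ are the sets in $\family{G}$ (the singletons $\{v_i\}$ have size $1$; the sets $\overline{A}\setminus\{v_i\}$, $\overline{A}\setminus G$, and $\{v_1,v_2,v_3\}\cup R$, $\{v_4,v_5,v_6\}\cup(A\setminus R)$ all meet $A$ unless $A=\emptyset$, and even when $A=\emptyset$ they have the wrong size or are complements of edges, not edges, provided $|V|=6>2\cdot 2$). Hence $h^{-1}[G]\in\family{G}$, so $h^{-1}$, and therefore $h$, is a graph automorphism, and rigidity yields $h=\id_V$.

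Next I would check that $f\colon(A,\family{R})\to(B,\family{S})$ is a $\cat{T}$-morphism, i.e.\ that $f^{-1}[S]\in\family{R}$ for every $S\in\family{S}$. Fix $S\in\family{S}$ and consider $Q=\{v_1,v_2,v_3\}\cup S\in\overline{\family{S}}$. Continuity of $g$ gives $g^{-1}[Q]\in\overline{\family{R}}$. Using $g=f\sqcup\id_V$ we compute $g^{-1}[Q]=\{v_1,v_2,v_3\}\cup f^{-1}[S]$. Now I must identify which member of $\overline{\family{R}}$ this can be. Its intersection with $V$ is exactly $\{v_1,v_2,v_3\}$, a three-element subset of $V$; scanning the defining list of $\overline{\family{R}}$, the only members whose trace on $V$ is $\{v_1,v_2,v_3\}$ are the sets of the form $\{v_1,v_2,v_3\}\cup R$ with $R\in\family{R}$ (the singletons and co-singletons have the wrong trace; an edge or co-edge has trace of size $2$ or $4$; a set $\{v_4,v_5,v_6\}\cup(A\setminus R)$ has trace $\{v_4,v_5,v_6\}$). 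Therefore $\{v_1,v_2,v_3\}\cup f^{-1}[S]=\{v_1,v_2,v_3\}\cup R$ for some $R\in\family{R}$, and since both unions are disjoint from $V$ in their $A$-part we get $f^{-1}[S]=R\in\family{R}$, as desired.

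I expect the main obstacle to be the degenerate case $A=\emptyset$ (and, dually, $B=\emptyset$), where the $A$-parts vanish and the various families of subsets of $\overline{A}$ collapse onto subsets of $V$, so one must check that no unintended coincidences arise; this is exactly why the construction insists on a rigid graph on $|V|=6$ vertices rather than something smaller. With $|V|$ this large the size bookkeeping above separates singletons ($1$), edges ($2$), triples ($3$), co-edges ($4$), and co-singletons ($5$), so the case analysis goes through uniformly. Once $h=\id_V$ and $f$ is continuous, we have $g=f\sqcup\id_V=\Psi f$, completing the proof that $\Psi$ is full; property (K1) was already observed, and (K2) follows because a $\cat{K}$-morphism $\Psi f$ restricts to $\id_V$ on $V$, whose image has six elements, so $|\,(\Psi f)[\overline{A}]\,|\ge 6>2$.
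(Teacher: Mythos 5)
Your proof is correct and takes essentially the same route as the paper: you use the edges $G\in\family{G}$ to show $h^{-1}$ carries edges to edges and conclude $h=\id_V$ from rigidity (the paper makes explicit the one-line observation that a bijective endomorphism of a finite graph is an automorphism, which you leave implicit), and you then use the sets $\{v_1,v_2,v_3\}\cup S$ and the trace-on-$V$ bookkeeping to deduce $f^{-1}[S]\in\family{R}$. Your extra size/case analysis (including $A=\emptyset$) only spells out details the paper compresses, so there is nothing to correct.
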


  \begin{proof}
  For every $G \in \family{G} \subseteq \overline{\family{S}}$, we have $g^{-1}[G] = h^{-1}[G] \in \overline{\family{R}}$. 
  Since $h$ is a bijection, the set $h^{-1}[G]$ has two elements.
  The only such sets in $\overline{\family{R}}$ are the members $\family{G}$, 
	therefore  $h^{-1}[G] \in \family{G}$.  We conclude that $h^{-1}$ is a bijective endomorphism of our fixed
	6-vertex graph. Since this graph is finite, every bijective endomorphism is an automorphism, hence 
	$h^{-1} = \id_V$ and the first part follows.
		
  For every $S \in \family{S}$, we have $\{v_1, v_2, v_3\} \cup S \in \overline{\family{S}}$, therefore (by the previous paragraph)
  $g^{-1}[\{v_1, v_2, v_3\} \cup S] = \{v_1, \dots, v_3\} \cup f^{-1}[S] \in \overline{\family{R}}$ and then $f^{-1}[S] \in \family{R}$, proving the second part.
  \end{proof}

\subsection{Key full embedding}

In this subsection we construct a contravariant full embedding $\Phi$ from the category $\cat{K}$ (see the previous subsection) into 
the category $\ASF$. 

The set functors in the image of $\Phi$ will be quotients of (covariant) hom--functors.
For a $\cat{K}$--object $\alg{A} = (A,\family{R})$ we set
\[
(\Phi \alg{A}) X  = \{g \st g: A \to X \} \modA \mbox{ for any set $X$} \enspace,
\]
where the equivalence $\approx_{\alg{A}}$ is given by
$$
g_1: A \to X \ \approx_{\alg{A}} \  g_2: A \to X \quad\mbox{ if }
$$
\begin{enumerate}
\item $g_1 = g_2$, or 
\item $g_1[A] = g_2[A]$ is a two--element set $\{x,x'\}$ and $g_1^{-1}[\{x\}] = g_2^{-1}[\{x'\}] \in \family{R}$.
\end{enumerate}
The definition of $\approx_{\alg{A}}$ makes sense, since $g_1^{-1}[\{x\}] = g_2^{-1}[\{x'\}] \in \family{R}$ if and only if
$g_1^{-1}[\{x'\}] = g_2^{-1}[\{x\}] \in \family{R}$ as follows from the property (K1). For the same reason, $\approx_{\alg{A}}$ is an equivalence relation. 

The behavior of $\Phi \alg{A}$ on a mapping $h: X \to Y$ is given by
\[
((\Phi \alg{A}) h) (g\modA) = hg\modA \mbox{ for any $g: A \to X$}\enspace.
\]
Clearly, $hg\modA$ does not depend on the choice of representative $g \in g\modA$.

The set functor $\Phi \alg{A}$ is $\kappa$--accessible for any $\kappa > |A|$ since, for each
set $X$ and $g\modA \in (\Phi \alg{A})X$, we have $g\modA = ((\Phi \alg{A})g)(\id_A\modA)$.

It remains to define $\Phi$ on $\cat{K}$--morphisms.
Let  $\alg{A} = (A,\family{R})$ and $\alg{B} = (B,\family{S})$ be $\cat{K}$--objects, and 
 $f: \alg{A} \to \alg{B}$ a $\cat{K}$--morphism. The $X$-th component of the natural transformation
$\Phi f: \Phi \alg{B} \to \Phi \alg{A}$ is defined by the formula
\[
(\Phi f)_X (g\modB) = gf\modA \mbox{ for any $g: B \to X$} \enspace.
\]



In order to see that the definition does no depend on the choice of $g \in g\modB$,
let us consider two $\approx_{\alg{B}}$-equivalent $g_1,g_2: B \to X$. Either $g_1=g_2$, in which case $g_1f= g_2f$,
or $g_1[B] = g_2[B] = \{x,x'\}$, $x \neq x'$, $g_1^{-1}[\{x\}] = g_2^{-1}[\{x'\}] \in \family{S}$.
Since $f$ is a $\cat{K}$--morphism, then $f^{-1}[g_1^{-1}[\{x\}]] = (g_1f)^{-1}[\{x\}]$ and $f^{-1}[g_2^{-1}[\{x'\}]]=(g_2f)^{-1}[\{x'\}]$  are in $\family{R}$. Moreover, neither of these two subsets of $A$ is empty or equal to $A$ by the property (K1), 
therefore $g_1f[A] = g_2f[A] = \{x,x'\}$, hence $g_1f$ and $g_2f$ are $\approx_{\alg{A}}$-equivalent.

\begin{lemma}
For any $\cat{K}$--morphism $f: \alg{A} \to \alg{B}$, the collection $\Phi f$ is a natural transformation $\Phi \alg B \to \Phi \alg A$.
\end{lemma}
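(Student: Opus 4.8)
The plan is to verify the two defining conditions for $\Phi f$ to be a natural transformation: first that each component $(\Phi f)_X$ is a well-defined function $(\Phi\alg{B})X \to (\Phi\alg{A})X$, and second that the naturality square commutes for every mapping $h: X \to Y$. The first point — independence of the choice of representative $g \in g\modB$ — has already been carried out in the paragraph immediately preceding the lemma, so I would simply recall that computation rather than repeat it.

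For naturality, fix a mapping $h: X \to Y$ and an element $g\modB \in (\Phi\alg{B})X$, represented by some $g: B \to X$. I would chase this element around both ways of the square. Going down then across: first apply $(\Phi\alg{B})h$ to get $hg\modB \in (\Phi\alg{B})Y$, then apply $(\Phi f)_Y$ to get $(hg)f\modA = hgf\modA$. Going across then down: first apply $(\Phi f)_X$ to get $gf\modA \in (\Phi\alg{A})X$, then apply $(\Phi\alg{A})h$ to get $h(gf)\modA = hgf\modA$. The two results coincide by associativity of composition of mappings, so the square commutes. One should note in passing that all the intermediate expressions are well-defined elements of the respective quotient sets, which follows from the well-definedness of the components of $\Phi\alg{A}$, $\Phi\alg{B}$ (already observed when those functors were defined) together with the well-definedness of $(\Phi f)_X$, $(\Phi f)_Y$ just recalled.

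I do not expect any serious obstacle here: once well-definedness of the components is granted, naturality is a one-line associativity argument, exactly the kind of routine verification that accompanies any hom-functor-style construction. The only mildly delicate point is bookkeeping — making sure that "apply $(\Phi f)$ then $(\Phi\alg A)h$" really does land on the same representative as the other route before passing to classes — but since the formulas are given on representatives and the operations involved are honest composition of set maps, this is immediate. Thus the proof is essentially: recall the preceding paragraph for well-definedness, then write out the commuting square and invoke associativity.

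\begin{proof}
Each component $(\Phi f)_X$ is a well-defined mapping $(\Phi\alg{B})X \to (\Phi\alg{A})X$ by the paragraph preceding this lemma. It remains to check naturality, i.e., that for every mapping $h: X \to Y$ the square
\[
(\Phi f)_Y \circ (\Phi\alg{B})h = (\Phi\alg{A})h \circ (\Phi f)_X
\]
commutes. Take any $g\modB \in (\Phi\alg{B})X$ with representative $g: B \to X$. Then
\[
(\Phi f)_Y\bigl((\Phi\alg{B})h\,(g\modB)\bigr) = (\Phi f)_Y(hg\modB) = (hg)f\modA = hgf\modA,
\]
while
\[
(\Phi\alg{A})h\,\bigl((\Phi f)_X(g\modB)\bigr) = (\Phi\alg{A})h\,(gf\modA) = h(gf)\modA = hgf\modA.
\]
Since composition of mappings is associative, the two expressions agree, so the square commutes and $\Phi f$ is a natural transformation $\Phi\alg{B} \to \Phi\alg{A}$.
\end{proof}
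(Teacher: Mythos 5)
Your proof is correct and follows exactly the same route as the paper: the well-definedness of the components is handled in the paragraph preceding the lemma, and naturality is verified by chasing $g\modB$ around both sides of the square, which agree by associativity of composition. Nothing is missing.
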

\begin{proof}
For any sets $X, Y$, mapping $h: X \to Y$, and $g\modB \in (\Phi \alg{B})X$
we have
$$
( (\Phi\alg{A}) h \circ (\Phi f)_X)(g\modB) = 
((\Phi\alg{A}) h) (gf\modA) = 
[hgf]\modA
$$
and
$$
((\Phi f)_Y \circ (\Phi\alg{B})h )(g\modB) =
(\Phi f)_Y(hg\modB) =
[hgf]\modA \enspace.
$$
Therefore $(\Phi\alg{A}) h \circ (\Phi f)_X = (\Phi f)_Y \circ (\Phi\alg{B})h$.
\end{proof}

The functor $\Phi$ is injective on objects, so it remains to check that $\Phi$ is faithful and full.

\begin{lemma}
$\Phi$ is faithful.
\end{lemma}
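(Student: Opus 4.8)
The plan is to show that if $f_1, f_2 : \alg{A} \to \alg{B}$ are $\cat{K}$--morphisms with $\Phi f_1 = \Phi f_2$, then $f_1 = f_2$. Since a natural transformation is determined by all of its components, it suffices to exhibit one set $X$ and one element of $(\Phi\alg{B})X$ on which the two components already differ whenever $f_1 \neq f_2$. The natural candidate is $X = B$ and the element $\id_B\modB \in (\Phi\alg{B})B$, because $(\Phi f_i)_B(\id_B\modB) = f_i\modA$.

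So first I would compute $(\Phi f_1)_B(\id_B\modB) = f_1\modA$ and $(\Phi f_2)_B(\id_B\modB) = f_2\modA$, and observe that the hypothesis $\Phi f_1 = \Phi f_2$ forces $f_1 \approx_{\alg{A}} f_2$. Now I invoke the definition of $\approx_{\alg{A}}$: either $f_1 = f_2$, in which case we are done, or else $f_1[A] = f_2[A]$ is a two--element set $\{x,x'\}$ with $f_1^{-1}[\{x\}] = f_2^{-1}[\{x'\}] \in \family{R}$. The task is therefore to rule out this second alternative, and this is the one step that requires an actual argument rather than unwinding definitions.

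The second alternative is impossible because it would force $|f_1[A]| = 2$, contradicting property (K2): every $\cat{K}$--morphism $f_1 : \alg{A} \to \alg{B}$ satisfies $|f_1[A]| > 2$. Hence only the first alternative can occur, giving $f_1 = f_2$ as desired. I expect no real obstacle here — property (K2) was built into $\cat{K}$ precisely so that the ``trivial'' identification in $\approx_{\alg{A}}$ cannot be witnessed by an image of a morphism, and the only thing to be careful about is to apply (K2) to the right map (a $\cat{K}$--morphism into $\alg{B}$, which $f_1$ and $f_2$ are by assumption) and to recall that (K2) is part of the conclusion of the auxiliary embedding construction established above.
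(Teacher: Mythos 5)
Your proof is correct and is essentially the paper's argument (stated contrapositively): both evaluate the natural transformations at $\id_B\modB$ to reduce $\Phi f_1 = \Phi f_2$ to $f_1 \approx_{\alg{A}} f_2$, and both use property (K2) to exclude the two--element--image case of $\approx_{\alg{A}}$. No gaps.
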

\begin{proof}
Let $\alg{A} = (A, \family{R})$, $\alg{B} = (B, \family{S})$ be $\cat{K}$--objects and 
$f_1, f_2: \alg{A} \to \alg{B}$ different morphisms. 
Due to the property (K2), the ranges of $f_1$ and $f_2$ both contain at least three elements,
so $f_1$ and $f_2$ are not $\approx_{\alg{A}}$--equivalent. 
But then $\Phi f_1$ and $\Phi f_2$ differ on $\id_B\modB$ since
$(\Phi f_1)_B([\id_B]_{\alg{B}}) = f_1\modA$ and
$(\Phi f_2)_B([\id_B]_{\alg{B}}) = f_2\modA$. 
\end{proof}

\begin{lemma}
$\Phi$ is full.
\end{lemma}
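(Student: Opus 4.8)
We need to show: given $\cat{K}$-objects $\alg{A} = (A,\family{R})$, $\alg{B} = (B,\family{S})$ and an arbitrary natural transformation $\mu \colon \Phi\alg{B} \to \Phi\alg{A}$, there exists a $\cat{K}$-morphism $f \colon \alg{A} \to \alg{B}$ with $\Phi f = \mu$. The natural first move is to extract a candidate $f$ by evaluating $\mu$ on a distinguished element. Apply $\mu$ at the component $X = B$ to the class $[\id_B]_{\alg{B}} \in (\Phi\alg{B})B$; this yields some class $[f]_{\alg{A}} \in (\Phi\alg{A})B$, i.e., (at least) a mapping $f \colon A \to B$, well-defined up to $\approx_{\alg{A}}$. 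The whole argument then splits into three tasks: (i) pin down $f$ well enough to make sense of it; (ii) show $f$ is a $\cat{K}$-morphism, i.e., continuous in the sense of $\cat{T}$; (iii) show $\Phi f = \mu$ on all components.

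For task (iii), naturality is the workhorse. For an arbitrary set $X$ and an arbitrary $g \colon B \to X$, factor $g$ through $\id_B$ via $g$ itself: the naturality square for $\mu$ applied to the map $g \colon B \to X$ gives $\mu_X((\Phi\alg{B})g([\id_B]_{\alg{B}})) = (\Phi\alg{A})g(\mu_B([\id_B]_{\alg{B}}))$, and since $(\Phi\alg{B})g([\id_B]_{\alg{B}}) = [g]_{\alg{B}}$ and $\mu_B([\id_B]_{\alg{B}}) = [f]_{\alg{A}}$, this reads $\mu_X([g]_{\alg{B}}) = [gf]_{\alg{A}} = (\Phi f)_X([g]_{\alg{B}})$. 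Because $\Phi\alg{B}$ is a quotient of a hom-functor, every element of $(\Phi\alg{B})X$ has the form $[g]_{\alg{B}}$ for some $g \colon B \to X$, so this computes $\mu_X$ completely and yields $\mu = \Phi f$ — once we know $f$ is a genuine morphism and that the displayed equalities do not depend on representatives. The representative-independence on the $\alg{B}$ side was already checked before the naturality lemma; on the $\alg{A}$ side it is built into the definition of $(\Phi\alg{A})g$.

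The real content is task (ii): why is $f$ continuous, i.e., $f^{-1}[S] \in \family{R}$ for every $S \in \family{S}$? I expect this to be the main obstacle, and I expect it to be forced by naturality together with the $\approx$-relations encoding membership in $\family{R}$ and $\family{S}$. The idea: fix $S \in \family{S}$. Using property (K1), $B \setminus S \in \family{S}$ and $S \neq \emptyset \neq B\setminus S$, so the two-block partition $\{S, B\setminus S\}$ of $B$ defines a map $g \colon B \to \{x,x'\}$ with $g^{-1}[\{x\}] = S$, and the "swap" map $g' \colon B \to \{x,x'\}$ with $g'^{-1}[\{x\}] = B \setminus S$ satisfies $g \approx_{\alg{B}} g'$. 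Hence $[g]_{\alg{B}} = [g']_{\alg{B}}$ in $(\Phi\alg{B})\{x,x'\}$, so applying $\mu_{\{x,x'\}}$ and using the computation from task (iii), $[gf]_{\alg{A}} = [g'f]_{\alg{A}}$ in $(\Phi\alg{A})\{x,x'\}$. Now $gf$ and $g'f$ are maps $A \to \{x,x'\}$ with $(gf)^{-1}[\{x\}] = f^{-1}[S]$ and $(g'f)^{-1}[\{x\}] = f^{-1}[B\setminus S] = A \setminus f^{-1}[S]$. These are distinct maps precisely when $f^{-1}[S] \notin \{\emptyset, A\}$; granting that (which should follow because property (K2) forces $|f[A]| > 2$, so $f$ cannot collapse everything into one block of the partition — I would verify this point carefully), the only way $gf \approx_{\alg{A}} g'f$ with $gf \neq g'f$ is clause (2) of the definition, which gives exactly $f^{-1}[S] \in \family{R}$. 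If instead $f^{-1}[S] \in \{\emptyset, A\}$ one checks directly it lies in $\family{R}$ using that $\family{R}$ — viewed inside a $\cat{K}$-object — is closed under complement and the constant maps are handled separately; I will need to confirm the $\cat{K}$-image contains what is needed, or argue that (K2) rules this degenerate case out. Once continuity is established, tasks (i) and (iii) are routine, and contravariant functoriality of $\Phi$ (already set up) closes the proof.
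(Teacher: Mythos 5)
Your overall route is the same as the paper's: extract $f$ from $\mu_B(\id_B\modB)$, use naturality to get $\mu_X(g\modB)=gf\modA$ for every $g: B\to X$, and test continuity of $f$ with the two maps $g,g'$ onto a two-element set determined by the partition $\{S, B\setminus S\}$. The gap is in the key step ``$gf \approx_{\alg{A}} g'f$ forces $f^{-1}[S]\in\family{R}$''. Your claim that $gf$ and $g'f$ ``are distinct maps precisely when $f^{-1}[S]\notin\{\emptyset,A\}$'' is false: $g$ and $g'$ disagree at \emph{every} point of $B$, hence $gf$ and $g'f$ disagree at every point of $A$, and $A\neq\emptyset$ for a $\cat{K}$-object (its underlying set contains $V$); so the two composites are always distinct, independently of $f$. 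This is exactly the observation the paper uses: distinctness together with $gf\approx_{\alg{A}} g'f$ means clause (2) of the definition of $\approx_{\alg{A}}$ must apply, which in particular forces the common image to be a two-element set and yields $(gf)^{-1}[\{x\}]=f^{-1}[S]\in\family{R}$. The ``degenerate'' case $f^{-1}[S]\in\{\emptyset,A\}$ is thereby ruled out automatically (it would make $gf$ and $g'f$ distinct constant maps, which are not $\approx_{\alg{A}}$-equivalent, contradicting the equality obtained from naturality); it does not need, and cannot receive, a separate positive verification.

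Moreover, both escape routes you sketch for that case are unusable. Checking ``directly'' that $f^{-1}[S]\in\family{R}$ when it equals $\emptyset$ or $A$ is impossible: by (K1) every member of $\family{R}$ is nonempty and has its complement again in $\family{R}$, so neither $\emptyset$ nor $A$ belongs to $\family{R}$. And invoking (K2) to bound $|f[A]|$ is circular: (K2) speaks only about $\cat{K}$-morphisms, and at this stage $f$ is merely a mapping, not yet known to be continuous; in fact (K2) is not needed for fullness at all (the paper uses it only for faithfulness). With this step repaired as above, the rest of your argument --- the extraction of $f$, the naturality computation, and the conclusion $\mu=\Phi f$ --- coincides with the paper's proof.
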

\begin{proof}
Let $\alg{A} = (A, \family{R}),$ $\alg{B}=(B, \family{S})$ be $\cat{K}$--objects and 
$\mu: \Phi \alg{B} \to \Phi \alg{A}$ a natural transformation. We must check that $\mu = \Phi f$ for some
morphism $f: \alg{A} \to \alg{B}$. 

Let $f: A \to B$ be any mapping such that $f\modA = \mu_B(\id_B\modB)$.
From the naturality of $\mu$ we get that, for any mapping $g: B \to X$,
\begin{align*}
\mu_X (g\modB) &=
(\mu_X \circ (\Phi \alg{B})g)(\id_B\modB) =
 ((\Phi \alg{A})g \circ \mu_B)(\id_B\modB) =
  ((\Phi \alg{A})g)(f\modA) \\ &=
  gf\modB\enspace.
\end{align*}
Now it suffices to show that the mapping $f: A \to B$ is a $\cat{K}$--morphism $\alg{A} \to \alg{B}$,
since the last equality then tells us $\mu = \Phi f$.
Take an arbitrary $S \in \family{S}$ and consider the mappings $g_1,g_2: B \to [2]$ such that
$g_1[S] = \{1\} = g_2[B \setminus S]$ and $g_1[B \setminus S] = \{2\} = g_2[S]$. Clearly $g_1 \approx_{\alg{B}} g_2$, therefore
$\mu_2(g_1\modB) = \mu_2(g_2\modB)$. Thus, using the above equality again, 
$g_1f \approx_{\alg{B}} g_2f$. Since $g_1f \neq g_2f$ (as $g_1(b) \neq g_2(b)$ for any $b \in B$), we get
that $\family{R} \ni (g_1f)^{-1}[\{1\}] = f^{-1}[g_1^{-1}[\{1\}]] = f^{-1}[S]$ and we are done. 
\end{proof}

We have proved one implication in Theorem~\ref{thm:main}.

\begin{theorem}
The category $\ASF$ is universal.
\end{theorem}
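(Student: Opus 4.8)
The plan is to stitch together the two embeddings produced above. Recall from Theorem~\ref{thm:topouniv} that $\cat{T}^{op}$ is universal, and recall the elementary observation made just before that theorem: if a known universal category admits a full embedding into a category $\cat{N}$, then $\cat{N}$ is itself universal. Hence it suffices to exhibit a single full embedding $\cat{T}^{op} \to \ASF$; the universality of $\ASF$ — which is exactly the ``if'' implication of Theorem~\ref{thm:main} — follows at once.

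I would obtain this embedding by composition. The auxiliary functor $\Psi \colon \cat{T} \to \cat{T}$ of the previous subsection was shown (via Lemmas on continuity, faithfulness, injectivity on objects, and fullness) to be a full embedding with image the full subcategory $\cat{K}$. Therefore $\cat{T}$ is isomorphic to $\cat{K}$, and passing to opposites, $\cat{T}^{op}$ is isomorphic to $\cat{K}^{op}$. On the other hand, the functor $\Phi$ of this subsection is a contravariant full embedding $\cat{K} \to \ASF$, that is, a (covariant) full embedding $\cat{K}^{op} \to \ASF$. Composing the isomorphism $\cat{T}^{op} \cong \cat{K}^{op}$ with $\Phi$ yields a full embedding $\cat{T}^{op} \to \ASF$, and we are done.

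The only place demanding any care is the bookkeeping of variance: one must confirm that $\Psi$ is genuinely covariant (so that taking opposites on both sides preserves the isomorphism) and that $\Phi$ reverses arrows exactly once, so that a concretizable category $\cat{Z}$, which has a full embedding into $\cat{T}^{op}$ by Theorem~\ref{thm:topouniv}, inherits a full embedding into $\ASF$. Apart from this, there is no further content: all the substantive work — verifying that $\cat{K}$ satisfies (K1) and (K2), and that $\Phi$ is well defined on objects and morphisms, functorial, faithful, and full — has already been discharged in the lemmas above, so the present step is a purely formal composition.
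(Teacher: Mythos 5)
Your argument is correct and is essentially the paper's own proof: compose the covariant full embedding $\Psi\colon \cat{T}\to\cat{K}$ with the contravariant full embedding $\Phi\colon\cat{K}\to\ASF$ to get a covariant full embedding $\cat{T}^{op}\to\ASF$, and invoke the universality of $\cat{T}^{op}$ from Theorem~\ref{thm:topouniv}. Your extra care about variance is fine but adds nothing beyond what the paper already does implicitly.
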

\begin{proof}
We have constructed two full embeddings, a covariant $\Psi: \cat{T} \to \cat{K}$ and contravariant $\Phi: \cat{K} \to \ASF$. 
The composition $\Phi\Psi$ can be regarded as a (covariant) full embedding $\cat{T}^{op} \to \ASF$.
But $\cat{T}^{op}$ is universal by Theorem~\ref{thm:topouniv}, so $\ASF$ is universal too. 
\end{proof}

\section{Accessible set functors are concretizable} \label{sec:concrete}

In this section we prove the second implication in Theorem~\ref{thm:main} by showing that $\ASF$ is a concretizable category. We will use Isbell's condition for concretizability, which we now formulate.

Let $\cat{W}$ be a category and $F, G$ its objects. Let $P(F,G)$ denote the class of all pairs of $\cat{W}$--morphisms of the form $(\mu: H \to F, \nu: H \to G)$. We define an equivalence relation $\sim_{F,G}$ on $P(F,G)$ by putting $(\mu, \nu) \sim (\mu',\nu')$ if, for every pair of morphisms $(\alpha: F \to K, \beta: G \to K)$, $\alpha\mu = \beta\nu$ if and only if $\alpha\mu' = \beta\nu'$.

Isbell's condition states that the equivalence relation $\sim_{F,G}$ on $P(F,G)$ has a transversal which is a set.
This condition is necessary for concretizability~\cite{Is63} and, as it turned out, is also sufficent~\cite{Fr73} (a simpler and more constructive proof is given in~\cite{Vin76}).

\begin{theorem} \label{thm:concret}
A category $\cat{W}$ is concretizable if and only if, for any two $\cat{W}$--objects $F,G$, there exists a \emph{set} $Q \subseteq P(F,G)$ containing for every pair $(\mu,\nu) \in P(F,G)$ an $\sim_{F,G}$--equivalent one. 
\end{theorem}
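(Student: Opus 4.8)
The statement is the Isbell--Freyd characterization of concretizability, so the plan is to prove the two implications separately. The forward direction (concretizable $\Rightarrow$ set transversal) is Isbell's necessity condition and is the more accessible one; the converse (set transversal $\Rightarrow$ concretizable), which actually has to produce a faithful functor, is the substantial part due to Freyd, and I expect it to be the main obstacle.

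For necessity, I would fix a faithful functor $U \colon \cat{W} \to \cat{Set}$ and two objects $F, G$. The point is that $U$ reduces every pair $(\mu \colon H \to F, \nu \colon H \to G) \in P(F,G)$ to purely set-theoretic data, namely its \emph{trace}
$$ t(\mu,\nu) = \{\, (U\mu(x), U\nu(x)) : x \in UH \,\} \subseteq UF \times UG. $$
I would then observe that the trace already determines the $\sim_{F,G}$-class: for any cospan $(\alpha \colon F \to K, \beta \colon G \to K)$, faithfulness gives $\alpha\mu = \beta\nu$ iff $U\alpha \circ U\mu = U\beta \circ U\nu$, and the latter is exactly the condition ``$U\alpha(a) = U\beta(b)$ for every $(a,b) \in t(\mu,\nu)$'', which depends on $(\mu,\nu)$ only through $t(\mu,\nu)$. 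Hence $t(\mu,\nu) = t(\mu',\nu')$ forces $(\mu,\nu) \sim_{F,G} (\mu',\nu')$, so $t$ descends to an injection of the quotient $P(F,G)/{\sim_{F,G}}$ into the power set of $UF \times UG$. As the codomain is a set, the quotient is a set, and choosing one representative per class yields the required set $Q$.

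For sufficiency I would build a faithful functor $U \colon \cat{W} \to \cat{Set}$ out of the transversals. Since faithfulness only asks that each pair of distinct parallel morphisms be separated, I would first record a separation principle: if $\phi \neq \psi \colon A \to B$, then the spans $(\id_A, \phi)$ and $(\id_A, \psi)$ are \emph{not} $\sim_{A,B}$-equivalent, as witnessed by the cospan $(\phi, \id_B)$. This suggests representing each object $W$ by classes of spans landing in $W$: setting $U W = \bigsqcup_{F} \big( P(F, W)/{\sim_{F,W}} \big)$ and letting a morphism $\phi \colon W \to W'$ act by post-composition $(\mu, \nu) \mapsto (\mu, \phi\nu)$. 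A short check shows post-composition respects each $\sim_{F,W}$, because every cospan out of $(F, W')$ yields, after composing its second leg with $\phi$, a cospan out of $(F, W)$; so $U$ is a well-defined functor, each summand is a set by hypothesis, and the summand indexed by $F = W$ already separates the out-morphisms of $W$ via the principle above.

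The main obstacle is precisely that this first construction is indexed by the proper class of all source objects $F$, so $UW$ is in general a proper class rather than a set, and the separating summand $F = W$ seems to force every object into the index. The heart of the sufficiency proof is therefore to collapse this object-index to a set while preserving faithfulness: one must identify spans with different sources that are ``indistinguishable'' and then bound, using the set-transversal hypothesis, the number of genuinely different behaviours by a cardinality (Hanf-number-type) argument. This is the technical content supplied by Freyd, and in the more constructive form by Vin\'arek, and I would follow their route to finish.
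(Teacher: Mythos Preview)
The paper does not prove this theorem; it states it as a known result, citing Isbell~\cite{Is63} for necessity and Freyd~\cite{Fr73} (with the simpler argument of Vin\'arek~\cite{Vin76}) for sufficiency. So there is no in-paper proof to compare against.

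Your proposal is consistent with that treatment and in fact goes a bit further. Your necessity argument is correct and is the standard one: the trace $t(\mu,\nu)\subseteq UF\times UG$ determines the $\sim_{F,G}$-class because faithfulness of $U$ reduces $\alpha\mu=\beta\nu$ to a set-theoretic condition on $t(\mu,\nu)$, and since there are only set-many possible traces one obtains a set transversal. For sufficiency you correctly identify the real obstacle---the naive span functor is class-valued because it is indexed by all possible source objects---and you do not actually overcome it; you defer to Freyd and Vin\'arek. That is exactly what the paper does, so your proposal is appropriate here, but be clear that the sufficiency direction is not something you have proved: the ``Hanf-number-type'' collapse you allude to is the entire content of Freyd's theorem, and your intermediate functor $W\mapsto\bigsqcup_F P(F,W)/{\sim_{F,W}}$ is not yet a functor to $\cat{Set}$.
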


We verify Isbell's condition for the category $\ASF$.
Fix set functors $F$ and $G$ and choose $X$ so that $F$ and $G$ are both $|X|$--accessible and $X$ is infinite. 

For a pair of natural transformations $(\mu: H \to F, \nu: H \to G)$  we consider
the following subsets of $FX \times GX$ and $F\emptyset \times G\emptyset$.
\begin{align*}
T(\mu,\nu) &= \{(\mu_X(x),\nu_X(x)) \in FX \times GX \st x \in HX\} \\
T_0(\mu,\nu) &= \{(\mu_X(x),\nu_X(x)) \in F\emptyset \times G\emptyset \st x \in H\emptyset\} 
\end{align*}
We claim that, for every pair of natural transformations $(\alpha: F \to K, \beta: G \to K)$, we have $\alpha \mu = \beta \nu$ if and only if $\alpha_X(x_1) = \beta_X(x_2)$ for every $(x_1,x_2) \in T(\mu,\nu)$ and $\alpha_{\emptyset}(x_1) = \beta_{\emptyset}(x_2)$ for every $(x_1,x_2) \in T_0(\mu,\nu)$. 
Before proving the claim, observe that Isbell's condition is a consequence. Indeed, for each pair of subsets $R \subseteq FX \times GX$, $R_0 \subseteq F\emptyset \times G\emptyset$ we add to $Q$ a single pair $(\mu,\nu) \in P(F,G)$ with $T(\mu,\nu)=R$ and $T_0(\mu,\nu)=R_0$ provided such a pair exists. 
Since the claim implies that $(\mu,\nu)  \sim_{F,G} (\mu',\nu')$ whenever $T(\mu,\nu)=T(\mu',\nu')$, such a set $Q$ satisfies the requirement in Theorem~\ref{thm:concret}.

One direction of the claim is obvious: if $\alpha \mu = \beta \nu$, then $\alpha_X(x_1) = \beta_X(x_2)$ for every $(x_1,x_2) \in T(\mu,\nu)$ and $\alpha_{\emptyset}(x_1) = \beta_{\emptyset}(x_2)$ for every $(x_1,x_2) \in T_0(\mu,\nu)$. Assume, conversely, that 
\begin{align*}
(\star) \quad &\alpha_X(x_1) = \beta_X(x_2) \mbox{ for every } (x_1,x_2) \in T(\mu,\nu) \enspace \\
  & \alpha_{\emptyset}(x_1) = \beta_{\emptyset}(x_2) \mbox{ for every } (x_1,x_2) \in T_0(\mu,\nu) \enspace.
\end{align*}
Take an arbitrary set $Y$ and an element $y \in HY$. Our aim is to prove that $\alpha_Y \mu_Y (y) = \beta_Y \nu_Y (y)$.

We distinguish two cases according to the cardinality of $Y$. The following lemma deals with the simpler case.

\begin{lemma}
If $|Y| \leq |X|$, then $\alpha_Y \mu_Y (y) = \beta_Y \nu_Y (y)$.
\end{lemma}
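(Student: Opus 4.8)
The plan is to reduce the case $|Y| \le |X|$ to the hypothesis $(\star)$ by transporting the element $y \in HY$ along suitable maps to and from $X$. Since $|Y| \le |X|$, I can choose an injection $j: Y \to X$ together with a retraction $r: X \to Y$ satisfying $r \circ j = \id_Y$ (here the fact that $X$ is infinite and, in the degenerate case $Y = \emptyset$, that we have the companion set $T_0$, takes care of the possibility that $Y$ is empty — when $Y = \emptyset$ there is no $j$, which is exactly why the empty set is handled separately via $T_0$). Set $z = Hj(y) \in HX$. Then $(\mu_X(z), \nu_X(z)) \in T(\mu,\nu)$, so $(\star)$ gives $\alpha_X\mu_X(z) = \beta_X\nu_X(z)$.

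Now I would chase this equality back to $Y$ using naturality. Applying the natural transformation $\alpha\mu: H \to K$ to the map $r: X \to Y$ and to the element $z \in HX$ yields
\[
\alpha_Y\mu_Y\big(Hr(z)\big) = Kr\big(\alpha_X\mu_X(z)\big) \enspace,
\]
and similarly for $\beta\nu$. But $Hr(z) = Hr\,Hj(y) = H(r\circ j)(y) = H\id_Y(y) = y$, so the left-hand side is exactly $\alpha_Y\mu_Y(y)$. Combining with the analogous identity for $\beta\nu$ and the equality $\alpha_X\mu_X(z) = \beta_X\nu_X(z)$ just obtained, I get $\alpha_Y\mu_Y(y) = Kr(\alpha_X\mu_X(z)) = Kr(\beta_X\nu_X(z)) = \beta_Y\nu_Y(y)$, as desired.

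The only real subtlety, hence the step I expect to need the most care, is the existence of the retraction $r: X \to Y$ with $r \circ j = \id_Y$: this requires $Y$ to be nonempty (so that the points of $X \setminus j[Y]$ can be sent somewhere into $Y$), which is why the statement is proved separately from the genuinely empty case. When $1 \le |Y| \le |X|$ such a pair $(j,r)$ always exists, and the rest is a routine double application of naturality. The harder case $|Y| > |X|$, where no such injection into $X$ is available, is precisely where $\kappa$-accessibility of $F$ and $G$ must be invoked, and is deferred to the subsequent argument.
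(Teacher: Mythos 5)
Your proof is correct and follows essentially the same route as the paper: transport $y$ to $X$ along an injection, apply $(\star)$ to the resulting element of $HX$, and pull back using a retraction (with $Y=\emptyset$ handled directly via $T_0$). The only cosmetic difference is that you apply naturality along the retraction $r$ to $Hj(y)$ and use $H(rj)=\id_{HY}$, whereas the paper applies naturality along the injection and then cancels with $Kk\circ Ki=\id_{KY}$ --- the same computation arranged differently.
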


\begin{proof}
If $Y=\emptyset$, then the pair $(\mu_{\emptyset} (Hi(y)), \nu_{\emptyset}( Hi(y)))$ is in $T_0(\mu,\nu)$, so $\alpha_{\emptyset}\mu_{\emptyset} Hi (y) = \beta_{\emptyset} \nu_{\emptyset} Hi (y)$ by the second part of $(\star)$.%
\footnote{This is the only place where $T_0(\mu,\nu)$ is used. The case $Y = \emptyset$ was neglected in the previous version of the paper and I thank the reviewer for pointing out this mistake.}

Otherwise, take any injective mapping $i: Y \to X$ and its left inverse $k: X \to Y$ (so that $ki = \id_Y$).
The pair $(\mu_X (Hi(y)), \nu_X( Hi(y)))$ is in $T(\mu,\nu)$, so $\alpha_X\mu_X Hi (y) = \beta_X \nu_X Hi (y)$.
Since $\alpha\mu$ and $\beta\nu$ are natural transformation, we get 
\[
Ki \circ \alpha_Y\mu_Y (y) = \alpha_X\mu_X \circ Hi (y) = \beta_X \nu_X \circ Hi (y) = Ki \circ \beta_Y \nu_Y (y)\enspace.
\]
Then also
\[
Kk \circ Ki \circ \alpha_Y\mu_Y (y) = Kk \circ Ki \circ \beta_Y \nu_Y (y)\enspace,
\]
but $Kk \circ Ki = K(ki) = K\id_{Y} = \id_{KY}$ and the claim follows.
\end{proof}

In the second case, when $|Y| \geq |X|$, we obtain the following consequence of $|X|$--accessibility of $F$ and $G$.

\begin{lemma}
There exist $s \in FX$, $t \in GX$ and injective mappings $i,j: X \to Y$ such that $Fi(s) = \mu_Y(y)$ and $Gj(t) = \nu_Y(y)$.
\end{lemma}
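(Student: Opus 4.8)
The plan is to derive the existence of $s$, $t$, $i$, $j$ directly from the $|X|$-accessibility of $F$ and $G$, using that $|Y| \geq |X|$ to upgrade arbitrary witnessing mappings into $Y$ to injective ones. First I would apply $|X|$-accessibility of $F$ to the element $\mu_Y(y) \in FY$: there is a set $Z$ with $|Z| < |X|$ and a mapping $p \colon Z \to Y$ together with $s' \in FZ$ such that $Fp(s') = \mu_Y(y)$. Since $|Z| < |X| \le |X|$, we may choose an injective mapping $e \colon Z \to X$ and a left inverse $r \colon X \to Z$ (which exists as $Z \neq \emptyset$ may be assumed, or handled trivially if $Z = \emptyset$ since $X$ is infinite hence nonempty). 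Put $s = Fe(s') \in FX$; then $Fr(s) = F(re)(s') = s'$. Similarly, applying $|X|$-accessibility of $G$ to $\nu_Y(y) \in GY$ yields $W$, $q \colon W \to Y$, $t' \in GW$ with $Gq(t') = \nu_Y(y)$, and an injective $e' \colon W \to X$ with left inverse $r' \colon X \to W$; set $t = Ge'(t')$.

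Now I need injective mappings $i, j \colon X \to Y$. The natural candidate is $i = p \circ r$ — but this need not be injective, because $p$ itself need not be injective and $r$ collapses $X$ down to $Z$. This is where $|Y| \geq |X|$ enters and is the main obstacle: I must perturb $p \circ r$ into a genuinely injective map without disturbing the value $Fi(s) = \mu_Y(y)$. The trick is that $Fi(s)$ only depends on $i$ through its restriction to (a representative of) the image relevant to $s'$. Concretely, I would argue as follows. We have $s = Fe(s')$, so $Fi(s) = F(i \circ e)(s')$, and $i \circ e \colon Z \to Y$. We want $i \circ e = p$ (which guarantees $Fi(s) = Fp(s') = \mu_Y(y)$) while $i$ is injective on all of $X$. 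Since $e$ is injective, we are free to prescribe $i$ on $e[Z]$ to be $p \circ e^{-1}$ (forcing $i \circ e = p$) and then extend $i$ to the rest of $X$. For injectivity we need $|Y| \geq |X|$: the values already used, $p[Z]$, form a set of size $< |X| \le |Y|$, so $Y \setminus p[Z]$ still has cardinality $|Y| \geq |X \setminus e[Z]|$, allowing an injective extension — \emph{provided} $i$ restricted to $e[Z]$, namely $p \circ e^{-1}$, is itself injective. It need not be, since $p$ need not be injective.

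To overcome this I would instead not demand $i \circ e = p$ on the nose, but only $Fi(s) = \mu_Y(y)$, and exploit that $s'$ factors through a \emph{quotient} of $Z$ in a controlled way: replace $Z$ by the image of $Z$ in $FY$'s witnessing data, or more cleanly, first note that without loss of generality the witnessing map $p \colon Z \to Y$ can be taken to be injective. Indeed, factor $p = p'' \circ p'$ with $p' \colon Z \to Z''$ surjective and $p'' \colon Z'' \to Y$ injective; then $|Z''| \le |Z| < |X|$, and $Fp(s') = Fp''(Fp'(s'))$, so replacing $Z$ by $Z''$ and $s'$ by $Fp'(s')$ we may assume $p$ is injective from the outset. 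The same reduction applies to $q$. With $p$ and $q$ injective and $|Z|, |W| < |X| \le |Y|$, I now extend: choose an injective $i \colon X \to Y$ with $i|_{e[Z]} = p \circ (e|^{e[Z]})^{-1}$ — this is injective on $e[Z]$ because $p$ is — and extend injectively over $X \setminus e[Z]$ into $Y \setminus p[Z]$ using $|Y \setminus p[Z]| = |Y| \ge |X| \ge |X \setminus e[Z]|$ (here $|Y| \geq |X|$ and $|Y|$ infinite are used). Then $i \circ e = p$, hence $Fi(s) = Fp(s') = \mu_Y(y)$, and symmetrically construct $j$ with $Gj(t) = \nu_Y(y)$. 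The degenerate subcases ($Z = \emptyset$ or $W = \emptyset$) are immediate since $FY$, $GY$ would then contain the relevant element as the image of a constant, and $X$ being infinite and nonempty causes no trouble; I would dispatch these in a line. The routine verification that the various composites behave as claimed I would leave implicit.
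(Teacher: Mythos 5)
Your proposal is correct and follows essentially the same route as the paper: apply $|X|$-accessibility to $\mu_Y(y)$ and $\nu_Y(y)$ and then factor the witnessing map $Z\to Y$ (with $|Z|<|X|\le|Y|$) through an injective map $i\colon X\to Y$, which is exactly the paper's factorization $f = i f'$. Your extra preliminary step of image-factorizing $p$ to make it injective is harmless but not needed (the paper simply chooses an injection $i$ whose image contains $f[X']$ and sets $f' = i^{-1}\circ f$ on that image), and the left inverses $r,r'$ from your first paragraph play no role in the final argument.
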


\begin{proof}
As $F$ is $|X|$--accessible, there exists $X'$, $s' \in FX'$, and $f: X' \to Y$ such that $|X'| < |X|$ and $Ff(s') = \mu_Y(y)$.
We factorize $f$ as $f = if'$ with $f': X' \to X$ and injective $i: X \to Y$, we set $s = Ff'(s')$, and get $Fi(s) = Fif'(s') = Ff(s') = \mu_Y(y)$. Finding $t$ and $j$ is completely analogous. 
\end{proof}

We fix $s,t,i,j$ satisfying the conclusion of the previous lemma and continue by finding further mappings that will help us in caluculations.  

\begin{lemma} \label{lemma:aa}
There exist $k,l,m: Y \to X$ and $n: X \to Y$ such that $ki = lj = \id_X$, $nmi=i$, and $nmj = j$.
\end{lemma}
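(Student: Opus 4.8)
The plan is to build $k,l$ and the pair $m,n$ independently of one another, since the statement imposes no relation between the two pairs. For $k$ and $l$ I would just take left inverses of $i$ and $j$: since $i$ is injective and $X$ is infinite (in particular nonempty), the map $k\colon Y\to X$ sending $i(x)$ to $x$ and every element of $Y\setminus i[X]$ to a fixed element of $X$ satisfies $ki=\id_X$; choosing $l$ analogously gives $lj=\id_X$. This part is routine.

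The content is in $m$ and $n$. The subtlety --- and essentially the only point that needs an idea --- is that $i[X]$ and $j[X]$ may overlap inside $Y$, so one cannot hope to ``split'' the two images by a clever choice of $m$. What saves us is that we only need $nmi=i$ and $nmj=j$, and for this a single injection of $i[X]\cup j[X]$ into $X$ suffices. Set $Z=i[X]\cup j[X]\subseteq Y$; since $i,j$ are injective and $X$ is infinite, $|Z|\le |X|+|X|=|X|$, so there is an injective map $p\colon Z\to X$. Take $m\colon Y\to X$ to be any extension of $p$ to all of $Y$, and take $n\colon X\to Y$ to agree on $p[Z]$ with the inverse of the bijection $p\colon Z\to p[Z]$ (followed by the inclusion $Z\hookrightarrow Y$) and to be arbitrary on $X\setminus p[Z]$.

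I would then verify the two identities, which is immediate: for any $x\in X$ we have $i(x)\in Z$, hence $m(i(x))=p(i(x))\in p[Z]$ and $n(m(i(x)))=p^{-1}(p(i(x)))=i(x)$, so $nmi=i$; the identical computation with $j$ in place of $i$ yields $nmj=j$. Thus the only real obstacle is spotting that the infinitude of $X$ lets $i[X]\cup j[X]$ embed into $X$, after which a single map $p$ serves for both $i$ and $j$; everything else is bookkeeping with partial functions extended arbitrarily.
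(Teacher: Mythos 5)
Your proposal is correct and follows essentially the same route as the paper: left inverses for $k,l$, and then an $m$ injective on $i[X]\cup j[X]$ (possible since $X$ is infinite) together with an $n$ making $nm$ the identity on that union; you merely spell out the construction of $m$ and $n$ via the auxiliary injection $p$ in more detail.
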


\begin{proof}
The mappings $k$ and $l$ are arbitrarily chosen left inverses to $i$ and $j$, respectively.

As $X$ is infinite, the union $U =i[X] \cup j[X]$ has cardinality $|X|$,  therefore there exists $m: Y \to X$ which is injective on $U$.
Now we take any $n: X \to Y$ such that the restriction of $nm$ to $U$ is the identity
and obtain $nmi=i$ and $nmj=j$, as required.
\end{proof}

Using the mappings from Lemma~\ref{lemma:aa} we get
$$
\alpha_Y\mu_Y (y) =
\alpha_Y Fi (s) = 
\alpha_Y F(iki) (s) =
\alpha_Y F(ik) Fi(s) =
\alpha_Y F(ik) \mu_Y(y)\enspace.
$$
Using this equality, the naturality of $\alpha$ and $\mu$, and Lemma~\ref{lemma:aa} we can continue this calculation as follows.
\begin{align*}
\alpha_Y F(ik) \mu_Y(y) &=
\alpha_Y F(nmik) \mu_Y(y) = 
\alpha_Y F(nm) F(ik) \mu_Y(y) \\
&=
K(nm) \alpha_Y F(ik) \mu_Y (y) =
K(nm) \alpha_Y \mu_Y (y) =
Kn Km \circ \alpha_Y \mu_Y (y) \\
&=
Kn \circ \alpha_X \mu_X Hm (y)\enspace,
\end{align*}
Similarly,
$$
\beta_Y\nu_Y (y) =
Kn \circ \beta_X \nu_X Hm (y)
$$
But $(\mu_X Hm (y),\nu_X Hm (y))$ is in $T(\mu,\nu)$, so $\alpha_X \mu_X Hm (y) = \beta_X \nu_X Hm (y)$ by the assumption $(\star)$ and 
we get $\alpha_Y\mu_Y (y) = \beta_Y\nu_Y (y)$, finishing the proof of the following theorem.

\begin{theorem}
The category $\ASF$ is concretizable.
\end{theorem}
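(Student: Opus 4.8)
The plan is to invoke Isbell's criterion (Theorem~\ref{thm:concret}) for $\cat{W} = \ASF$. The task reduces, for a fixed pair of accessible set functors $F$ and $G$, to producing a \emph{set} $Q \subseteq P(F,G)$ meeting every $\sim_{F,G}$--class. First I would fix an infinite set $X$ large enough that both $F$ and $G$ are $|X|$--accessible, and to each pair $(\mu: H \to F, \nu: H \to G)$ I would attach the two finite ``traces''
$T(\mu,\nu) \subseteq FX \times GX$ and $T_0(\mu,\nu) \subseteq F\emptyset \times G\emptyset$ recording the images of the components at $X$ and at $\emptyset$.

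The heart of the matter is the claim that a pair $(\alpha: F \to K, \beta: G \to K)$ satisfies $\alpha\mu = \beta\nu$ exactly when $\alpha_X$ and $\beta_X$ agree on $T(\mu,\nu)$ and $\alpha_\emptyset$, $\beta_\emptyset$ agree on $T_0(\mu,\nu)$. One direction is immediate. For the converse, assuming $(\star)$, I would take an arbitrary $y \in HY$ and prove $\alpha_Y\mu_Y(y) = \beta_Y\nu_Y(y)$, splitting on whether $|Y| \le |X|$ or $|Y| \ge |X|$. In the small case I would transport $y$ along an injection $Y \hookrightarrow X$ with a left inverse and exploit the naturality of the composites $\alpha\mu$ and $\beta\nu$ (treating $Y = \emptyset$ separately, which is where $T_0$ is needed). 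In the large case I would use $|X|$--accessibility to write $\mu_Y(y) = Fi(s)$ and $\nu_Y(y) = Gj(t)$ with $i,j: X \to Y$ injective and $s \in FX$, $t \in GX$, and then manufacture auxiliary maps $k,l,m: Y \to X$ and $n: X \to Y$ (possible because $X$ is infinite, so $i[X] \cup j[X]$ still has cardinality $|X|$) with $ki = lj = \id_X$ and $nmi = i$, $nmj = j$; a short diagram chase using these, together with naturality, collapses $\alpha_Y\mu_Y(y)$ onto $Kn \circ \alpha_X\mu_X Hm(y)$ and likewise on the $\beta\nu$ side, after which $(\star)$ applied to the pair $(\mu_X Hm(y), \nu_X Hm(y)) \in T(\mu,\nu)$ finishes it.

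With the claim in hand the rest is bookkeeping: two pairs sharing the same $T$ and the same $T_0$ are $\sim_{F,G}$--equivalent, and there is only a set's worth of possible values of $(T(\mu,\nu), T_0(\mu,\nu))$ --- one for each pair of subsets $R \subseteq FX \times GX$ and $R_0 \subseteq F\emptyset \times G\emptyset$ --- so selecting one witnessing pair for each realizable $(R,R_0)$ produces the desired set $Q$, and Theorem~\ref{thm:concret} yields concretizability of $\ASF$. The main obstacle is the large--$Y$ case of the claim: arranging the system of maps $k,l,m,n$ so that the diagram chase actually reduces the computation to the component at $X$, and not overlooking the empty set, which is precisely why $T_0$ must be tracked alongside $T$.
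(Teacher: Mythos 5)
Your proposal is correct and follows essentially the same route as the paper: Isbell's condition, the traces $T(\mu,\nu)$ and $T_0(\mu,\nu)$ at an infinite $X$ witnessing $|X|$--accessibility of $F$ and $G$, the two--case analysis on $|Y|$, and the auxiliary maps $k,l,m,n$ in the large case. One small slip: the traces need not be \emph{finite} (only sets), but that is all the argument requires, so nothing is affected.
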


\section{Conclusion}

We have shown that the category of accessible set functors is as comprehensive as possible: it contains all concretizable categories as full subcategories. The next obvious question is how comprehensive is the collection of all set functors. Recall that this collection is not even a class and it is not clear what is a natural limitation on possible full subcollections.

Perhaps a more interesting question concerns the so called \emph{hyper--universality}. A combination of theorems by Ku\v cera~\cite{K4} and Trnkov\'a~\cite{T7} (see Section 4.7 in \cite{T5}) implies that in every universal category it is possible to find equivalences on hom--sets (which are compatible with compositions) so that the quotient category is hyper--universal, that is, it contains \emph{every} category as a full subcategory. Is there a natural hyper--universal quotient of the category of accessible set functors? Is it some kind of a homotopy equivalence on natural transformations?

Finally, let us return to Universal Algebra and finitary set functors. The main result of~\cite{Ba2} says, in the language of~\cite{BKO}, that the category of minions (with possibly infinite universes) is alg--universal. 
However, in the promise CSP we are so far mostly interested in minions with finite universes. How comprehensive is their category?

\bibliographystyle{plain}


\end{document}